\documentclass[preprint,12pt,sort&compress]{amsart}

\usepackage{amsmath,amssymb,latexsym,amscd,amsthm}
\usepackage[dvips]{epsfig, color}
\usepackage[margin=1.18in]{geometry} % Margins

%%% Theorem environments.
\newtheorem{theorem}{Theorem}[section]  % If subsection, get 3.1.2, etc.

\newtheorem{property}[theorem]{Property}
\newtheorem{lemma}[theorem]{Lemma}
\theoremstyle{definition}

\newtheorem{question}[theorem]{Question}

\theoremstyle{remark}

%%% Special commands used.
\def\bara{B\'ar\'any}

\def\cara{Carath\'eodory}

\def\R{\mathbb{R}}
\def\S{\mathbf{S}}

\def\O{{\bf \Omega}}

\def\hati{\widehat{i}}

\def\conv{\operatorname{conv}}

\def\zero{{\bf 0}}

\title{Computational Lower Bounds for Colourful Simplicial Depth}

\author{Antoine Deza}
\address{Advanced Optimization Laboratory, Department of Computing and Software,
     McMaster University, Hamilton, Ontario, Canada}
\email{deza@mcmaster.ca}
\author{Tamon Stephen}
\address{Department of Mathematics, Simon Fraser University,
British Columbia, Canada}
\email{tamon@sfu.ca}
\author{Feng Xie}
\address{Advanced Optimization Laboratory, Department of Computing and Software,
     McMaster University, Hamilton, Ontario, Canada}
\email{groovyfeng@gmail.com}

\begin{document}
\thispagestyle{empty}
\maketitle

\begin{abstract} 
The colourful simplicial depth problem in dimension $d$ is to
find a configuration of $(d+1)$ sets of $(d+1)$ points
such that the origin is contained in the convex hull of each
set (colour) but contained in a minimal number of \emph{colourful}
simplices generated by taking one point from each set.
A construction attaining $d^2+1$ simplices is known, and is
conjectured to be minimal.  This has been confirmed up to $d=3$,
however the best known lower bound for $d \ge 4$ is
$\lceil \frac{(d+1)^2}{2} \rceil$.  

A promising method to improve this lower bound is to look
at combinatorial \emph{octahedral systems} generated by
such configurations.  The difficulty to employing this
approach is handling the many symmetric configurations
that arise.  We propose a table of invariants which exclude
many of partial configurations,
and use this to improve the lower bound in dimension 4.
\end{abstract}

% \begin{keyword}
% colourful simplicial depth \sep colourful Carath\'eodory theorem \sep
% discrete geometry \sep polyhedra \sep combinatorial symmetry
% \end{keyword}
% \subjclass[2010]{52A35, 05D05, 52B05}

%%%%% 1. INTRODUCTION %%%%%%%%%%%%%%%%%%%%%%%%%%%%%%%%%%%%
\section{Introduction}\label{se:intro}
A \emph{colourful configuration} is the union of $(d+1)$ sets, or colours,  $\S_0, \S_1, \ldots, \S_{d}$ of $(d+1)$ 
points in $\R^d$. We are interested in the \emph{colourful simplices} formed
by taking the convex hull of a set containing one point of each colour.
The \emph{colourful simplicial depth} problem is to find
a colourful configuration, with each $\S_i$ containing the origin $\zero$ in the interior
of its convex hull, minimizing the number of colourful
simplices containing $\zero$.  We denote this minimum by $\mu(d)$

Computing $\mu(d)$ can be viewed as refining
{\bara}'s Colourful {\cara} Theorem
\cite{Bar82} whose original version gives $\mu(d) \ge 1$, 
and $\mu(d) \ge d+1$ with a useful modification.
The question of computing $\mu(d)$ was studied in~\cite{DHST06}, 
which showed $\mu(2)=5$,  that $2d \le \mu(d) \le d^2+1$ for $d \ge 3$
and that $\mu(d)$ is even when $d$ is odd.
The lower bound has since been improved by~\cite{BM06}, who verified 
the conjecture for $d=3$, \cite{ST06} and~\cite{DSX11} 
which includes the current strongest bound of
$\mu(d) \ge \lceil \frac{(d+1)^2}{2} \rceil$ for $d \ge 4$. 

One motivation for colourful simplicial depth is to
establish bounds on ordinary simplicial depth.  
A point $p \in \R^d$ has {\it simplicial depth} $k$
relative to a set $S$ if it is contained in $k$ closed simplices 
generated by $(d+1)$ sets of $S$.  
This was introduced by Liu \cite{Liu90} as a statistical 
measure of how representative $p$ is of $S$. 
See \cite{Gro10,MW12,Kar12,KMS12} for recent progress on this problem.
We remark also that the colourful simplicial depth of a
point is the number of solutions to a colourful linear program
in the sense of \cite{BO97} and \cite{DHST08}.

Our strategy, following~\cite{CDSX11}, is to show that a particular 
hypergraph whose edges correspond to the colourful simplices
containing $\zero$ in a configuration cannot exist.

%%%%%%%%%%%%%%%%%%%%%%%%%%%%%%%%%%%%%%%%%%%%%%%%%%%%%%%%%%%%%%
\subsection{Preliminaries}\label{se:prelim}
A \emph{colourful configuration} is a collection of $(d+1)$ sets 
$\S_0, \S_1, \ldots, \S_{d}$ of $(d+1)$ points in $\R^d$.
Let $\S = \cup_{i=0}^{d} \S_i$.
Without loss of generality we assume
that the points in $\S \cup \{ \zero \}$ are in general position.
Recall that $\mu(d)$ represents the minimal number of 
\emph{colourful simplices} generated by one point from each
$\S_i$ and containing $\zero$ in any $d$-dimensional colourful configuration
with $\zero\in\cap_{i=1}^{d+1}\conv(\S_i)$.

We take the simplices to be closed and remark that the
minimum should be attained.
{\bara}'s Colourful {\cara} Theorem~\cite{Bar82} gives that
some colourful simplex containing $\zero$ exists
in a colourful configuration with $\zero\in\cap_{i=1}^{d+1}\conv(\S_i)$, 
and
in fact that every point in $\S$ is part of some such
colourful simplex containing $\zero$.  This guarantees that 
$\mu(d) \ge d+1$.
A colourful configuration of~\cite{DHST06} in $\R^d$ has only $d^2+1$
colourful simplices containing $\zero$, thus $\mu(d) \le d^2+1$.
It is known that $\mu(1)=2$ (trivial), $\mu(2)=5$ \cite{DHST06}, 
$\mu(3)=10$ \cite{BM06} and that 
$\mu(d) \ge \lceil \frac{(d+1)^2}{2} \rceil$ for $d \ge 4$ \cite{DSX11}.

A colourful configuration defines a
$(d+1)$-uniform hypergraph on $\S = \displaystyle \cup_{i=1}^{d+1} \S_i$
by taking edges corresponding to the vertices of $\zero$
containing colourful simplices.  We will call a hypergraph that
arises from a colourful configuration with $\zero\in\cap_{i=1}^{d+1}\conv(\S_i)$
 a \emph{configuration hypergraph}.
The Colourful {\cara} Theorem gives that any
configuration hypergraph  must satisfy:
\begin{property}\label{prop:cover}
Every vertex of a configuration hypergraph belongs to at least one of its edges.
\end{property}

Fix a colour $i$.  We call a set $t$ of $d$ points that contains
exactly one point from each $\S_j$ other than $\S_i$ an
$\widehat{i}-transversal$.  That is to say, an 
$\widehat{i}-$transversal  $t$ has $t \cap \S_i = \emptyset$
and $|t \cap \S_j|=1$ for $i \ne j$.
We call any pair of disjoint $\widehat{i}-$transversals
an $\hati$-\emph{octahedron}; these may or may not generate
a cross-polytope ($d$-dimensional octahedron) in the 
geometric sense that their convex hull is a cross-polytope
with same coloured points never adjacent in the skeleton
of the polytope. 

A key property of colourful configurations is that for a 
fixed $\hati$-octahedron $\O$, the parity of the number of colourful
simplices containing $\zero$ formed using points from $\O$ and
a point of colour $i$ does not
depend on which point of colour $i$ is chosen.
This is a topological fact that corresponds to the fact
that $\zero$ is either ``inside'' or ``outside'' the
octahedron,
see for instance the {\em Octahedron Lemma} of \cite{BM06} for a proof.
Figure~\ref{figOcta} illustrates this in a two dimensional case
where $\zero$ is at the centre of a circle that contains points of
the three colours.
\begin{figure}[htb]
\begin{center}
\epsfig{file=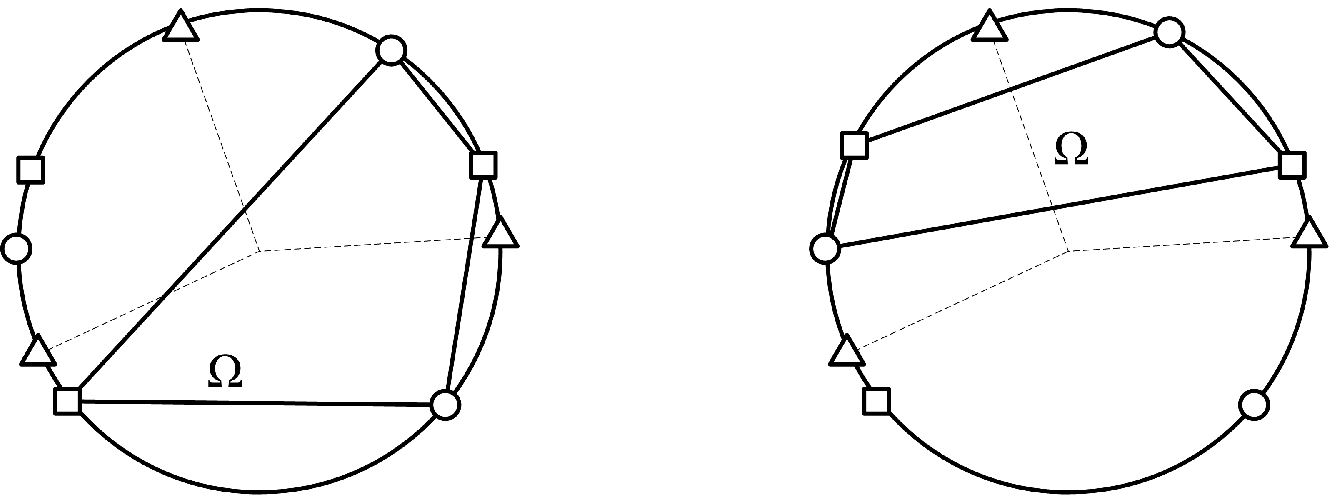, height=3.0cm}
\caption{Two-dimensional cross-polytopes $\O$ containing $\zero$ and not.} 
\label{figOcta}
\end{center}
\end{figure}

We carry the definitions of $\hati$-\emph{transversals} and
$\hati$-\emph{octahedra} over to the hypergraph setting.
Then any hypergraph arising from a colourful configuration must satisfy:
\begin{property}\label{prop:oct}
For any octahedron $\O$ of a hypergraph arising from a colourful configuration, the parity of the set of edges
using points from $\O$ and a fixed point $s_i$ for the $i${\rm th} 
coordinate is the same for all choices of $s_i$.  
\end{property}

Consider a hypergraph whose vertices are 
$\S = \cup_{i=0}^d \S_i$
and whose edges have exactly one element from each set.
If the hypergraph satisfies Property~\ref{prop:oct} we call it an 
\emph{octahedral system}, if it additionally satisfies
Property~\ref{prop:cover} we call it a 
\emph{octahedral system without isolated vertex}.  A
colourful configuration with $\zero\in\cap_{i=1}^{d+1}\conv(\S_i)$ and $k$ colourful simplices 
containing $\zero$ has a configuration hypergraph that is
an octahedral system without isolated vertex
with $k$ edges.  Then any lower bound for the number of
edges in an octahedral system without isolated vertex with $(d+1)$ colours is also 
a lower bound $\nu(d)$ for $\mu(d)$.  It is an interesting question
whether there are any octahedral systems without isolated vertex not arising
from any colourful configurations, and if not whether $\nu(d) < \mu(d)$
for some $d$.  This purely combinatorial approach was
originally suggested by {\bara}.

%%%%%%%%%%%%%%%%%%%%%%%%%%%%%%%%%%%%%%%%%%%%%%%%%%%%%%%%%
\section{A table of invariants} \label{se:table}
Octahedral systems have the advantage of being combinatorial
and finite.  In principle for any particular $d$ and $k$ we
can check if there exists an octahedral system without isolated vertex on 
$\S = \cup_{i=0}^d \S_i$ with up to
$k$ edges by generating all the (finitely many) hypergraphs
with up to $k$ edges, each containing one element from 
each $\S_i$ and then testing if they satisfy
Properties~\ref{prop:cover} and~\ref{prop:oct}.
The difficulty, of course, lies in the sheer number
of such hypergraphs, and in verifying
Property~\ref{prop:oct} efficiently.  

We can reduce the search space by exploiting
the many combinatorial symmetries in such hypergraphs 
and considering only configurations that satisfy certain
normalizations.  However, this alone is not sufficient
to improve the known lower bounds even for $d=4$.  We thus turn our
attention to how to use Property~\ref{prop:oct} effectively.
Since a given configuration has very many octahedra,
in fact $(d+1) {d \choose 2}^d$, we expect that most 
unstructured hypergraphs fail Property~\ref{prop:oct}
for many octahedra.  

There are so many octahedra that it would be
difficult to verify Property~\ref{prop:oct} explicitly for 
non-trivial octahedral systems.  However, we can often
quickly detect violations of Property~\ref{prop:oct}
since hypergraphs which are not octahedral systems
may fail Property~\ref{prop:oct} for most octahedra.
Even more, we will show that certain partial systems 
cannot satisfy Property~\ref{prop:oct} even with the 
addition of a number of edges.

\subsection{The large table} \label{se:lt}
We begin by fixing an arbitrary colour as colour 0
and an arbitrary $0$-transversal.  We can label the
points in each set from $0$ to $d$ and without loss
of generality take the transversal to contain the
0 point of each set.  For convenience we write edges
as a string of $(d+1)$ numbers and transversals as
string of $d$ numbers with $\ast$ corresponding to the
omitted colour.  Thus the $0$-transversal considered
is $\ast 0 0 \ldots 0$.  

There are $d^d$ possible octahedra formed by choosing
a second $0$-transversal disjoint from $\ast 0 0 \ldots 0$.
For a hypergraph of colourful edges on $\S$,
we generate a $d^d \times (d+1)$ table whose rows
are indexed by the octahedra containing $\ast 0 0 \ldots 0$
and whose columns are indexed by the points of colour 0.
The entries of the table are the parity of the number of
edges using points from the octahedron corresponding to
the row and the point of colour 0 corresponding to the
column.  We call this the \emph{large table}.
We observe that Property~\ref{prop:oct} implies that
if the hypergraph is an octahedral system, the rows of the large 
table must be constant -- either all zeros or all ones.

For any hypergraph if we build the large table, by taking
the view that the entries in the 0 column are
correct we can produce a \emph{score} for the hypergraph by
simply counting the entries in the remaining columns
that do not agree with the 0 column entry.  
So any octahedral system will
have a score of 0, while the hypergraph that consists
only of the edge $000 \ldots 0$ has the maximum possible
score of $d^{d+1}$.  Let $z(e)$ be the number of zeros
in hypergraph edge $e$.  Then we have:

\begin{lemma} \label{le:edge}
Adding or deleting edge $e$ to an octahedral system changes
its score by at most $d^{z(e)}$.
\end{lemma}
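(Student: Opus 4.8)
The plan is to track exactly which entries of the large table are toggled when the edge $e = c_0 c_1 \cdots c_d$ is added or deleted, and then bound the induced change in score; the starting hypergraph plays no role, so I treat the toggle of a single edge in the abstract. First I would observe that $e$ can influence the entry in row $\O$ and column $s$ only when $s = c_0$ and, for every colour $j \geq 1$, the colour-$j$ point $c_j$ of $e$ is one of the two colour-$j$ points present in the octahedron $\O$. Since an octahedron containing $\ast 0 0 \cdots 0$ is determined by its second transversal $\ast a_1 a_2 \cdots a_d$ with each $a_j \in \{1,\ldots,d\}$, the colour-$j$ points available in $\O$ are precisely $0$ and $a_j$. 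In particular $e$ touches entries in a single column, namely $s = c_0$, and leaves every other column untouched.

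Next I would count the affected rows in that column. For a colour $j \geq 1$ with $c_j = 0$, the membership condition $c_j \in \{0, a_j\}$ holds for all $d$ values of $a_j$, so $a_j$ is unconstrained; for a colour $j \geq 1$ with $c_j \neq 0$, the condition forces $a_j = c_j$, leaving a single choice. Hence the number of rows whose column-$c_0$ entry is affected is exactly $d^{m}$, where $m = \#\{\, j \geq 1 : c_j = 0 \,\}$ counts the zeros of $e$ among colours $1,\ldots,d$. Toggling $e$ flips the parity of each of these $d^m$ entries and changes nothing else in the table.

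The score change then splits into two cases according to whether $e$ hits the reference column. If $c_0 \neq 0$, then $m = z(e)$ and all $d^{z(e)}$ flipped entries sit in non-reference columns; each such flip alters that entry's agreement with the corresponding column-$0$ entry by exactly one, so the total score changes by at most $d^{z(e)}$. If $c_0 = 0$, then $m = z(e) - 1$ and the $d^{z(e)-1}$ flipped entries all lie in the reference column $0$. This is the delicate case and the step I expect to be the main obstacle: flipping a reference entry in a given row simultaneously reverses the agreement status of all $d$ non-reference entries in that row, so the contribution of that row to the score can swing by as much as $d$ (from $k$ disagreements to $d-k$). Multiplying the per-row bound by the number of affected rows gives a total change of at most $d \cdot d^{z(e)-1} = d^{z(e)}$.

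In both cases the score changes by at most $d^{z(e)}$, which is the claim. The only point requiring care is the reference-column case, where one must resist treating the flipped entries as independent contributors and instead bound the per-row swing by $d$; everything else is a direct count of table entries sharing the colour-$0$ coordinate $c_0$ and compatible with the octahedron's second transversal.
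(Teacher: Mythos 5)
Your proposal is correct and follows essentially the same argument as the paper: identify that $e$ toggles only entries in column $c_0$, count the affected rows as $d^m$ with $m$ the number of zeros among colours $1,\ldots,d$, and split into the cases $c_0\neq 0$ (each flip changes the score by one) and $c_0=0$ (each affected row's contribution swings by at most $d$). Your treatment of the reference-column case is in fact slightly more explicit than the paper's about why the per-row bound of $d$ suffices.
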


\begin{proof}
Consider first the case where the element of colour 0 of $e$ is
$j \ne 0$.  Then the only entries of the large table
that change are those in column $j$.  Further, an
entry will change if and only if the remaining elements
of $e$ lie in the octahedron corresponding to the row.
Then the affected rows are those that whose non-zero
transversal agrees with the non-zero elements of $e$.
For zero elements of $e$ any choice of transversal element
is allowed.  This gives the requisite number of affected
entries.

In the case where the 0th element of $e$ is 0, 
elements of column 0 are changed, thus changing the correctness
of the remaining $d$ entries in the affected rows,
which themselves do not change.
The number of rows affected again depends on the number
of zero elements in the remainder of $e$. This is one
less than the total number of zero elements in $e$,
again giving the requisite number of affected entries.
\end{proof}

Lemma~\ref{le:edge} allows us to make the following
useful observation.  Call an edge $e$ of a hypergraph
 \emph{isolated} if there is no other edge
that differs from $e$ only in a single coordinate.
Then:

\begin{lemma} \label{le:no_iso}
An octahedral system with $d^2$ or fewer edges must
not contain any isolated edges.
\end{lemma}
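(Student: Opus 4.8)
The plan is to prove the contrapositive: every octahedral system $\mathcal{H}$ that contains an isolated edge $e$ has at least $d^2+1$ edges, so that $d^2$ or fewer edges precludes an isolated edge. First I would normalize using the freedom in choosing colour $0$ and in naming the points of each colour. Since $e$ meets every colour, I take colour $0$ to be arbitrary and relabel so that $e = 00\ldots0$, whence its $\hat 0$-transversal is $\tau = \ast 0 0 \ldots 0$, and I build the large table of Section~\ref{se:lt} relative to this choice. Writing an edge as $(c; y_1,\ldots,y_d)$ with $c$ its colour-$0$ point, its contribution to the entry in row $t=(t_1,\ldots,t_d)\in\{1,\ldots,d\}^d$ and column $c$ is $1$ exactly when $y_j\in\{0,t_j\}$ for all $j$; as $t_j\neq 0$, this indicator equals $\prod_{j:\,y_j\neq 0}[t_j=y_j]$, i.e. the indicator $\mathbb{1}_{C_f}$ of the subcube $C_f\subseteq\{1,\ldots,d\}^d$ obtained by fixing the coordinates in the support of $(y_1,\ldots,y_d)$, so that $|C_f| = d^{\,d-|\mathrm{supp}|}$.

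Next I would invoke Property~\ref{prop:oct}, which guarantees that each row of the large table of an octahedral system is constant; hence over $\mathbb{F}_2$ every column sum agrees with column $0$. Working in the space $\mathbb{F}_2^{\{1,\ldots,d\}^d}$ and letting $D=\sum_{\mathrm{col}\,0}\mathbb{1}_{C_f}$ be the common row value, I separate off $e$, whose subcube is the whole cube, to obtain $D=\mathbf{1}+g$ with $g=\sum_{\mathrm{col}\,0,\,f\neq e}\mathbb{1}_{C_f}$; and for each colour-$0$ point $c'\neq 0$ I obtain $\sum_{\mathrm{col}\,c'}\mathbb{1}_{C_f}=D=:h$. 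Isolation of $e$ now enters as a constraint on supports: since no edge differs from $e$ in exactly one coordinate, an edge other than $e$ with colour-$0$ point $0$ must differ from $e$ in at least two of the colours $1,\ldots,d$ (support $\ge 2$, so $|C_f|\le d^{d-2}$), and an edge with colour-$0$ point $c'\neq 0$ must differ from $e$ in some colour besides $0$ (support $\ge 1$, so $|C_f|\le d^{d-1}$).

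Finally I would run a counting argument on these two $\mathbb{F}_2$-identities. A family of subcubes whose $\mathbb{F}_2$-sum is a function $\psi$ must cover every point of $\{\psi=1\}$ an odd, hence positive, number of times, so the total size of the subcubes is at least $|\{\psi=1\}|$. Applied to $g$, and using $|\{g=1\}| = d^d-|h|$, this forces at least $(d^d-|h|)/d^{d-2}$ col-$0$ edges besides $e$; applied to $h$ for each of the $d$ nonzero colours, it forces at least $|h|/d^{d-1}$ edges each, hence $|h|/d^{d-2}$ in total. Adding $e$ itself, the edge count is at least $1+(d^d-|h|)/d^{d-2}+|h|/d^{d-2}=d^2+1$, where the two $|h|$-terms cancel. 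I expect the main obstacle to be the reduction in the second paragraph, namely translating isolation cleanly into the two support lower bounds and verifying that the whole-cube contribution of $e$ is exactly what produces the $d^d$ in the numerator; once that is in place, the cancellation that renders the final bound independent of $|h|$ is what closes the argument.
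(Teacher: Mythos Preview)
Your proof is correct and follows essentially the same route as the paper: normalize the isolated edge to $00\ldots 0$, build the large table relative to $\ast 0\ldots 0$, observe that every edge not adjacent to $e$ affects at most $d^{d-1}$ entries of the table, and divide. The paper packages this via a single potential (the \emph{score}, initially $d^{d+1}$, reduced by at most $d^{d-1}$ per edge by Lemma~\ref{le:edge}); your column-by-column covering argument and the cancellation of $|h|$ in the final line are exactly the disaggregated version of that same count.
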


\begin{proof}
Without loss of generality we take the isolated edge
to be $0 0 0 \ldots 0$ and consider the large table
formed.  The hypergraph consisting of the unique edge
$0 0 0 \ldots 0$ has score $d^{d+1}$.  Now keep track
of the score as additional edges are added one at a time.
Since $0 0 0 \ldots 0$ is isolated, all subsequent edges
have at most $d-1$ zeros, and thus reduce the score by
at most $d^{d-1}$.  So at least $\frac{d^{d+1}}{d^{d-1}}=d^2$
additional edges are required to reduce the score to zero.
The claim follows.
\end{proof}

\subsection{The small table} \label{se:st}
For some cases that we are interested in it is possible
to compute the score for the large table in reasonable
time, but for the purpose of quickly showing that
a given hypergraph is not an octahedral system (and may be
far from one), it is effective to initially focus on
a subset of the rows.  One such small subset are the
$d$ rows generated by transversals $\ast 1 1 \ldots 1$,
$\ast 2 2 \ldots 2$, $\ldots$, $\ast d d \ldots d$.
We call the restriction of the large table to these $d$ rows
the \emph{small table}.  Note that the initial numberings
are arbitrary, and the composition of the small table 
depends on this numbering, which we may fix later as
part of our search algorithm.

The advantage of focusing on this $d \times (d+1)$ table
is that the entries are relatively independent.  Only
edges of the form $x 0 0 \ldots 0$ can change more
than one entry of this table.  After accounting for such
edges, each entry can be only be affected by the $2^d-1$ edges
that are on the relevant octahedron with the given final
coordinate.

%%%%%%%%%%%%%%%%%%%%%%%%%%%%%%%%%%%%%%%%%%%%%%%%%%%%%%%%%
\section{Enumeration Strategy and Computation}
In this section we describe how the small and large
tables, combined with symmetry breaking strategies, 
can be used to search efficiently for small 
octahedral systems.  This strategy was implemented \cite{Xie12}
in Python version 2.6 on an AMD Opteron Processor 8356 
core (2.3G Hz) and is able to prove that $\mu(4) \ge 14$ in about
30 days of CPU time.
This improves by 1 the bound of \cite{DSX11}.

Our strategy is enumerative: we try to build an octahedral
system without isolated vertex by adding one edge at a time.  At the start we 
reduce our choices by using symmetry.  We then seek to
add edges that are required by the small table.
Subsequently we seek to add edges that are required by
Lemma~\ref{le:no_iso}, and only as a last resort do we
consider arbitrary edges.  As we branch, we attempt to quickly identify
partial configurations that cannot extend to a sufficiently small
octahedral system.

\subsection{Initial assumptions} \label{se:init}
We begin by fixing an initial colour 0 and transversal
$t_0 = \ast 0 0 \ldots 0$;  the tables we consider are with
respect to $t_0$.  We then use the results
of \cite{DSX11}, to break the problem into several
cases based on $l$, the number of edges containing $t_0$,
$b$ the number of the small table octahedra that have
odd parity, and $j$ the minimum number of transversals
covering any point of colour 0.  
Recall that the small table octahedra are
those formed by $t_0$ with each of $t_i = \ast i i i \ldots i$
for $i=1,2, \ldots, d$.

It is clear that for any octahedral
system without isolated vertex and
with $d^2$ or fewer edges we must have 
$1 \le l,b,j \le d$ and that the number of edges is
at least $j (d+1)$.  Further, \cite{DSX11} shows that
we must have $j+b \ge d+1$, and that the number
of edges must be at least $(b+l)(d+1)-2bl$, as well as
at least $dl + 1$ assuming that the colour 0 is chosen to minimize
$l$ and that $l \ge \frac{d+2}{2}$.
This last fact allows us to assume that $l \le d-1$ if
we have an octahedral
system without isolated system with less than
$d^2+1$ edges.

\subsection{Details of enumeration} \label{se:14}
In the following we describe an enumeration that
excludes $\mu(4) = 13$.  This improves the bound of \cite{DSX11} by 1.  
To rule out possible octahedral systems without isolated vertex of size 13 (or 14),
it is sufficient to consider cases where $j=1$ or $j=2$.
which in turn means $b=3$ or $b=4$.  In the case $b=3$,
we have at least $15 - l$ simplices, so $l=2$ or $l=3$,
and in the case $b=4$, we have $20 - 3l$ so $l=3$; we would need to
consider additional cases for $l$ and $b$ to rule out 14.
In summary, we need to rule out systems where the triple
$(l,b,j)$ is one of $(3,4,2), (3,4,1), (3,3,2), (2,3,2)$.

By reordering the points of colour 0, we can take the
edges $x 0 0 0 0$ to be in the system for $0 \le x \le l-1$,
and not in the system for $l \le x \le 4$.  Consider
the small table after including these edges with $l=3$,
illustrated in Table~\ref{ta:l3}.
\begin{table}[h!]
  \begin{tabular}{|c|ccccc|}
    \hline
    & 0 & 1 & 2 & 3 & 4 \\
    \hline
    $\ast 1 1 1 1$ & 1 & 1 & 1 & 0 & 0 \\
    $\ast 2 2 2 2$ & 1 & 1 & 1 & 0 & 0 \\
    $\ast 3 3 3 3$ & 1 & 1 & 1 & 0 & 0 \\
    $\ast 4 4 4 4$ & 1 & 1 & 1 & 0 & 0 \\
    \hline
  \end{tabular}
  \caption{The small table with $l=3$}\label{ta:l3}
\end{table}
Now if $b=4$, then we are requiring that the small table
be comprised entirely of 1's.  So in this case the entries
in the first 3 columns are correct, while the entries in
the last 2 columns are incorrect.  

For $(l,b,j)=(3,4,2)$ we proceed to enumerate configurations as follows.
Since $l=3$, we include initial edges $00000, 10000, 20000$.
We then add edges to correct each of the 8 entries of Table~\ref{ta:l3}
which must be fixed to get the correct small table
for $b=4$.  As previously remarked, adding any edge not
of the form $x0000$ will change only a single entry in
the small table.  For instance, the entry in the first row
and fourth column can be changed only by an edge of the
form $3 a b c d$ where $a,b,c,d \in \{0,1\}$.  Given that
that $30000$ cannot be added to the configuration without changing $l$, there
remain only 15 possible edges that change the entry,
and one must be in our configuration.  In fact, by reordering
the colours we can take it to be one of $31000$, $31100$, $31110$
and $31111$.  

We could continue to exploit symmetries in this way -- for instance
depending on which of the previous 4 edges is chosen, 
the next edge could be one of 4 to 7 edges fixing the next
table entry.  However, we did not do this so as to avoid
extensive case analysis.  Instead, we began branching on
all 15 possible edges that switch a given table entry until
the table is correct and the partial configuration has 11
edges.

As we branch we check
two simple predictors that may indicate that the configuration 
requires several more edges.  First, we look for points
that are not currently included in any edge.
If some colour still has $k$ uncovered points, then we
require $k$ additional edges.  Second, since any vertex of
colour 0 must be covered by at least $j$ edges, we look
to see which points of colour 0 are not contained in 
sufficiently many edges, and get a score $k'$ by totaling
the undercounts.  At the same time, we may find that all
vertices of colour 0 are already covered by more than $j$
edges (especially when $j=1$), in which case the partial
configuration no longer belongs to this subcase and can
be excluded. 
Again, we require $k'$ additional edges.
If either $k$ or $k'$ is sufficiently large (in this case 3),
then the current partial configuration cannot extend to an
octahedral system without isolated vertex with less than 14 edges and is
abandoned.

Otherwise, we examine the configuration to see if it
has an isolated edge.  If it contains an isolated edge $e$, 
the by Lemma~\ref{le:no_iso}, if the configuration is
to extend to an octahedral system without isolated vertex with less than
17 edges, it must include an edge adjacent to $e$.
That is, it must contain $e'$ differing from $e$ only
in a single coordinate.  There are only 20 such edges
so we can branch on them.  We then repeat the process
of applying predictors and looking for an isolated
edge until we either find an octahedral system without isolated vertex
with less than 14 edges, or all partial configurations
with fewer edges are exhausted.

If we do arrive at a partial configuration with no
isolated edges, then as a last resort we may have to
branch on all possible edges.
However, this happens infrequently enough that the enumeration ends in a
reasonable time.  

The remaining cases, where $(l,b,j)$ is $(3,4,1), (3,3,2)$ or
$(2,3,2)$ are similar.  
Having exhausted all these cases, we conclude that $\mu(4) \ge 14$.

%%%%% 4. CONCLUSIONS AND QUESTIONS %%%%%%%%%%%%%%%%%%%%%%%%%%%%%%%
\section{Conclusions and remarks}\label{se:conclusions}
Octahedral systems are intriguing combinatorial objects.
Using the observation that configuration hypergraphs generate 
octahedral systems without isolated vertex, we propose a computational approach to 
establishing lower bounds for colourful simplicial depth.
A straightforward implementation of this improves the best
known lower for 4-dimensional configurations from $\mu(4) \ge 13$
to $\mu(4) \ge 14$.

We can ask several other questions about octahedral systems.
We remark that the maximum cardinality octahedral system without isolated vertex 
is the set of all possible edges; if we have $(d+1)$ sets of 
cardinality $(d+1)$ it has size $(d+1)^{d+1}$.  It is the
hypergraph arising from the colourful configuration of points in $\R^{d}$
that places the sets $\S_1, \ldots, \S_{d+1}$ close to
vertices $v_1, \ldots v_{d+1}$ respectively of a regular
simplex containing $\zero$.
\begin{question}
Can all octahedral systems without isolated vertex 
on $(d+1)$ sets of $(d+1)$ points arise from  colourful configurations
in $\R^{d}$? 
\end{question}

We conclude by mentioning that many aspects of colourful
simplices are just beginning to be explored.  For instance,
the combinatorial complexity of a system of colour simplices
is analyzed in \cite{ST12}.  As far as we know the algorithmic
question of computing colourful simplicial depth is untouched, 
even for $d=2$ where several interesting algorithms for
computing the monochrome simplicial depth have been developed.
See for instance the survey \cite{Alo06}.

%%%%% 5. ACKNOWLEDGMENTS %%%%%%%%%%%%%%%%%%%%%%%%%%%%%%%%%%%%%
\section{Acknowledgments}\label{se:ack}
This work was supported by grants from the 
Natural Sciences and Engineering Research Council of Canada (NSERC)
and MITACS, and by the Canada Research Chairs program.
T.S.~thanks the University of Cantabria for hospitality
while working on this paper.

% BIBLIOGRAPHY
\bibliographystyle{amsalpha}

\bibliography{refs}

\end{document}